\documentclass[11pt]{article}
\usepackage{enumerate}
\usepackage{amssymb,a4wide,latexsym,makeidx,epsfig,fleqn}
\usepackage{amsthm}
\usepackage{amsmath}
\allowdisplaybreaks[4]
\usepackage{enumerate}
\usepackage{graphicx}
\usepackage{color}
\usepackage{epstopdf}
\newtheorem{theorem}{Theorem}[section]

\newtheorem{lemma}[theorem]{Lemma}

\newtheorem{problem}[theorem]{Problem}

\newtheorem{conjecture}[theorem]{Conjecture}

\begin{document}
\textwidth 150mm \textheight 230mm
\setlength{\topmargin}{-15mm}
\title{Sufficient conditions for spanning $k$-trees in tough graphs
\footnote{This work is supported by the National Natural Science Foundations of China (Nos. 12371348, 12201258), the Postgraduate Research \& Practice Innovation Program of Jiangsu Normal University (No. 2025XKT0632), High-Quality Science and Technology Cultivation Project of Jiangsu Normal University (No. JSNUGZL2026069).}}
\author{{ Caili Jia, Yong Lu\footnote{Corresponding author.}}\\
{\small  School of Mathematics and Statistics, Jiangsu Normal University,}\\ {\small  Xuzhou, Jiangsu 221116,
People's Republic
of China.}\\
{\small E-mails: jiacaili0309@163.com, luyong@jsnu.edu.cn}}

\date{}
\maketitle
\begin{center}
\begin{minipage}{120mm}
\vskip 0.3cm
\begin{center}
{\small {\bf Abstract}}
\end{center}
{\small
The toughness of a graph $G$, denoted by $\tau(G)$, is defined by $\tau(G)=$min $\{\frac{|S|}{c(G-S)}:S\subseteq V(G)$ and $c(G-S)\geq2\}$. A graph $G$ is said to be $\tau$-tough if $\tau(G)\geq \tau$.
Let $k\geq2$ be an integer. A tree $T$ is called a $k$-tree if $d_{T}(v)\leq k$ for each $v\in V(T)$, that is, the maximum degree of a $k$-tree is at most $k$. A $k$-tree $T$ is a spanning $k$-tree if $T$ is a spanning subgraph of a connected graph $G$.
In 1989, Win [Graphs Combin. 5 (1989) 201--205] proved that if $\tau(G)\geq\frac{1}{k-2}$, where $k\geq3$, then $G$ contains a spanning $k$-tree. Liu, Fan and Shu [Discrete Math. 348 (2025) 114593] provided a tight sufficient condition based on the spectral condition for connected $\frac{1}{k}$-tough and $\frac{1}{k-1}$-tough graphs to contain a spanning $k$-tree, where $k\geq3$ is an integer.
A natural and interesting problem arises: Can the value of $\tau$ be refined?

When $\frac{1}{k-2}>\tau\geq\frac{1}{k-1}$, we initially establish a lower bound on the size to ensure that a connected $\frac{t}{t(k-2)+1}$-tough graph $G$ contains a spanning $k$-tree, where $k\geq3$ and $t\geq1$ are integers. Meanwhile, we provide two sufficient conditions in terms of spectral radius and signless Laplacian spectral radius for a connected $\frac{t}{t(k-2)+1}$-tough graph $G$ to contain a spanning $k$-tree, where $k\geq3$ and $t\geq1$ are integers. When $t=1$, we obtain the result $\eta=1$ from Liu, Fan and Shu.

\vskip 0.1in \noindent {\bf Keywords}:\ Toughness; Spectral radius; Signless Laplacian spectral radius; Spanning $k$-tree.\vskip
0.1in \noindent {\bf AMS Subject Classification (2020)}: \ 05C35; 05C50. }
\end{minipage}
\end{center}

\section{Introduction }
\hspace{1.3em}
Throughout this paper, we consider only finite, undirected and simple graphs. Let $G=(V(G),E(G))$ be a graph, where $V(G)$ is the vertex set and $E(G)$ is the edge set.
The \emph{order} and \emph{size} of $G$ are denoted by $|V(G)|=n$ and $|E(G)|=e(G)$, respectively. We denote by $d_{G}(v)$ the \emph{degree} of $v\in V(G)$. Denote by $c(G)$ the number of components of a graph $G$.
Let $G_1$ and $G_2$ be two disjoint graphs. The \emph{union} $G_1\cup G_2$ is the graph with vertex set $V(G_1)\cup V(G_2)$ and edge set $E(G_1)\cup E(G_2)$. The \emph{join} $G_1\vee G_2$ is derived from
$G_1\cup G_2$ by joining every vertex of $G_1$ with every vertex of $G_2$ by an edge.

Let $G$ be a graph of order $n$, and let the \emph{adjacency matrix} of $G$ be defined as $A(G)=(a_{ij})_{n\times n}$, where $a_{ij}=1$ if $v_{i}v_{j}\in E(G)$, and $a_{ij}=0$ otherwise.
The \emph{degree diagonal matrix} is the diagonal matrix of vertex degrees of $G$, denoted by $D(G)$. The \emph{signless Laplacian matrix} $Q(G)$ of $G$ is defined by $Q(G)=D(G)+A(G)$.
The eigenvalues of $A(G)$ and $Q(G)$ are called the \emph{eigenvalues} and the \emph{signless Laplacian eigenvalues} of $G$, and denoted by $\rho_{1}(G)\geq \rho_{2}(G)\geq\cdots\geq\rho_{n}(G)$ and $q_{1}(G)\geq q_{2}(G) \geq\cdots\geq q_{n}(G)$, respectively. The largest eigenvalues of $A(G)$ and $Q(G)$ are also called the \emph{spectral radius} and the \emph{signless Laplacian spectral radius} of $G$, and denoted by $\rho(G)$ and $q(G)$, respectively.

For two integers $a$ and $b$ such that $0\leq a\leq b$, a spanning subgraph $F$ of $G$ is called an \emph{$[a,b]$-factor} if $a\leq d_F(v)\leq b$ for each $v\in V(G)$. If $a=1$, then an $[a,b]$-factor is a $[1,b]$-factor. If $b=1$, then a $[1,b]$-factor is a $1$-factor or a perfect matching. Let $k\geq2$ be an integer. A tree $T$ is called a \emph{$k$-tree} if $d_T(v)\leq k$ for each $v\in V(T)$, that is, the maximum degree of a $k$-tree is at most $k$. A $k$-tree $T$ is a \emph{spanning $k$-tree} if $T$ is a spanning subgraph of a connected graph $G$. It is obvious that $G$ admits a spanning $k$-tree if and only if $G$ admits a connected $[1,k]$-factor.

The study of spanning $k$-trees and $[a,b]$-factors has received extensive attention from scholars. Some sufficient conditions for graphs with $[1,2]$-factors were obtained by many researchers \cite{EF, J, JPW, KKK, LP, WYY}. Kim et al. \cite{KOPR}, Kano and Saito \cite{KS}, Zhou, Xu and Sun \cite{ZXS} showed some results for the existence of $[1,b]$-factors in graphs. Zhou and Liu \cite{ZL} studied the relationship between the spectral radius of a connected graph and its odd $[1,b]$-factors, and claimed a lower bound on the existence of odd $[1,b]$-factors via the spectral radius. Many scholars investigated the properties of $[a,b]$-factors in graphs, and provided some graphic parameter conditions for graphs having $[a,b]$-factors \cite{L, LW, M, Z}.

A graph $G$ is \emph{$\tau$-tough} if $|S|\geq\tau\cdot c(G-S)$ for every subset $S\subseteq V(G)$ with $c(G-S)\geq2$, where $c(G)$ is the number of components of a graph $G$. The \emph{toughness} $\tau(G)$ of $G$ is the maximum $\tau$ for which $G$ is $\tau$-tough (taking $\tau(K_n)=\infty$). Hence if $G$ is not a complete graph, $\tau(G)=$min$\{\frac{|S|}{c(G-S)}\}$ where the minimum is taken over all cut sets of vertices in $G$. This concept was first introduced by Chv\'{a}tal \cite{C} as a way of measuring how tightly various pieces of a graph hold together. Chv\'{a}tal proposed the following conjecture.

\noindent\begin{conjecture}\label{conj:1.1.}\cite{C}
Here exists a finite constant $\tau_0$ such that every $\tau_{0}$-tough graph is Hamiltonian.\\
\end{conjecture}

Fan, Lin and Lu \cite{FLL} initially presented a tight sufficient condition in terms of the spectral radius for a connected $1$-tough graph to contain a connected Hamilton cycle.\\

One naturally considers the following problem:\\

\noindent\begin{problem}\label{pro:1.1.}\cite{LFS}
What is a tight spectral condition to guarantee the existence of factors among tough graphs?\\
\end{problem}

Win \cite{W} proved that if $\tau(G)\geq\frac{1}{k-2}$ with $k\geq3$, then $G$ contains a spanning $k$-tree. \\

\noindent\begin{lemma}\label{le:2.1.}\cite{W}\cite{EZ}
Let $k\geq3$ be an integer. If a connected graph $G$ satisfies
\begin{align*}
c(G-S)\leq(k-2)|S|+2,
\end{align*}
for any vertex subset $S$ of $G$, then $G$ admits a spanning $k$-tree.\\
\end{lemma}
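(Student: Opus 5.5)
This is Win's theorem, and I would prove it by the standard extremal argument: take a spanning tree with as few high-degree vertices as possible, then use the leaves of that tree to build a set $S$ that violates the hypothesis. Suppose $G$ is connected, satisfies $c(G-S)\le (k-2)|S|+2$ for every $S\subseteq V(G)$, and has no spanning $k$-tree. Among all spanning trees $T$ of $G$, choose one that minimises the vector $\sum_{v}\max\{0,d_T(v)-k\}$, comparing first on vertices of degree at least $k+1$. Tie-break lexicographically by the number of vertices of degree exactly $k$ (and so on). Since $T$ is not a $k$-tree, some vertex has $d_T(v)\ge k+1$.

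The key step is to build a set $S$ with many components in $G-S$ by repeated leaf-exchange arguments. Start with $S_0=\{v: d_T(v)\ge k+1\}$ and delete $S_0$ from $T$. Consider the components of $T-S$. If two of them, or rather two "terminal" pieces, are joined by an edge $xy$ of $G$ not in $T$, then adding $xy$ and deleting a suitable tree edge at a vertex of $S$ on the $x$--$y$ path lowers the degree of a high-degree vertex. This would contradict the choice of $T$, provided $x$ and $y$ themselves have $T$-degree at most $k-2$, so that they do not exceed the bound after receiving the new edge. Vertices of degree $k-1$ or $k$ can become problematic, so I would iteratively enlarge $S$. Set $S_{i+1}=S_i\cup\{$vertices of degree $\ge k$, or degree $k-1$ where needed, lying in components of $T-S_i$ that are adjacent in $G$ to other such components$\}$. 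Continue until the process stabilises at some $S$. Using the extremal choice, show that components of $T-S$ are also components of $G-S$, i.e.\ $G$ has no edges between distinct components of $T-S$. Each exchange is justified because every vertex in $S\setminus S_0$ has degree at least $k$ in $T$, and an exchange decreasing its degree to $k$ keeps the potential no larger while strictly improving the tie-break.

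Then the count. In the tree $T$, the number of components of $T-S$ is
\[
\sum_{v\in S} d_T(v) - e(T[S]) - (|S|-1)\cdot 0 \;\ge\; \sum_{v\in S}d_T(v) -2(|S|-1) + \ldots,
\]
more precisely $c(T-S)=\sum_{v\in S}d_T(v)-2e(T[S])-|S|+1 \ge \sum_{v\in S}(d_T(v)-2)+2$ (using $e(T[S])\le |S|-1$, and forest counting). All vertices of $S$ have degree at least $k$, with at least one of degree at least $k+1$. This gives $c(G-S)=c(T-S)\ge (k-2)|S|+3$, contradicting the hypothesis.

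The main obstacle is the second step. One must define the closure $S$ carefully so that every vertex added has $T$-degree at least $k$, and so that no $G$-edge joins two components of $T-S$. This requires a clean exchange lemma: if such an edge $xy$ exists, one produces a spanning tree that is better in the chosen order. I would first try the ordering "minimise the number of degree-$\ge k+1$ vertices, then the number of degree-$k$ vertices adjacent to them", and check the exchange case by case according to whether the deleted edge's endpoint lies in $S_0$ or in $S\setminus S_0$. I would also check the degenerate case $S=\emptyset$, which is impossible since $T$ is not a $k$-tree.
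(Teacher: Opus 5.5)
Your counting step is the right endgame, but the step that produces the set $S$ is missing, and the mechanism you suggest for it does not work. (The paper gives no proof of this lemma; it quotes it from Win and from Ellingham--Zha.)

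\textbf{The counting step is fine.} Suppose $S$ consists only of vertices with $d_T(v)\ge k$, contains a vertex with $d_T(v)\ge k+1$, and no edge of $G$ joins two components of $T-S$. Then $c(G-S)=c(T-S)=\sum_{v\in S}d_T(v)-e(T[S])-|S|+1\ge (k-2)|S|+3$. The coefficient of $e(T[S])$ is $1$, not $2$, but your final inequality is correct.

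\textbf{Producing such an $S$ is the whole content of Win's theorem, and your argument does not do it.}
\begin{enumerate}[(a)]
\item \emph{The tie-break claim fails.} Suppose $xy\in E(G)$ joins two components of $T-S$, $d_T(x)=d_T(y)=k-1$, and the $T$-path $P$ from $x$ to $y$ has no vertex of degree at least $k+1$. Take any degree-$k$ vertex $v$ on $P$ and an edge $vw$ of $P$. Then $T+xy-vw$ has the same excess. The only vertices that can leave degree $k$ are $v$ and $w$, while every vertex of $\{x,y\}\setminus\{w\}$ enters degree $k$. So the number of vertices of degree exactly $k$ does not go down.
\item Hence your claim that such an exchange ``strictly improves the tie-break'' is false, and minimality of $T$ gives no contradiction from this edge. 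A single swap at a degree-$k$ vertex only moves the difficulty onto $x$ and $y$. It is useful only as one link in a chain of swaps that ends by lowering a vertex of degree at least $k+1$.
\item \emph{Enlarging $S$ does not rescue it.} To kill such an edge you must put $x$ or $y$ into $S$. A vertex of degree $k-1$ contributes only $k-3$ to $\sum_{v\in S}(d_T(v)-2)$, so the bound $(k-2)|S|+3$ is lost.
\end{enumerate}

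\textbf{The missing idea} is a marking argument with chains of swaps, as in F\"{u}rer and Raghavachari's analysis of minimum-degree spanning trees. It runs in the opposite direction to yours.
\begin{enumerate}[(1)]
\item Take $T$ minimising your excess, and start with $W=\{v: d_T(v)\ge k\}$.
\item While some edge $xy$ of $G$ joins two components of $T-W$, let $P$ be the $x$--$y$ path in $T$. If $P$ has no vertex of degree at least $k+1$, delete from $W$ the degree-$k$ vertices of $W$ on $P$.
\item \emph{Key lemma.} Every vertex $u$ deleted from $W$ can have its degree lowered by one through a sequence of swaps. These swaps use only edges inside the component of the current $T-W$ containing $u$, and raise no degree above $k$. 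The proof is by induction on the order of deletion. It works because components only grow as $W$ shrinks, and the sub-chains needed for the two ends of each swap edge lie in disjoint earlier components.
\item \emph{Contradiction step.} Suppose $P$ ever contains a vertex $a$ with $d_T(a)\ge k+1$. First lower $x$ and $y$ (if they have degree $k$) inside their own components. This leaves every edge at a vertex of $W$ untouched, so the edge of $P$ at $a$ still lies on the new $x$--$y$ path. Exchanging that edge for $xy$ strictly decreases the excess, contradicting the choice of $T$.
\item \emph{Termination.} The process stops because $W$ strictly shrinks at each step. At that point $W$ contains all vertices of degree at least $k+1$, a nonempty set, and consists only of vertices of degree at least $k$. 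No edge of $G$ joins two components of $T-W$. This is exactly the set your count needs.
\end{enumerate}
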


In $1989$, Win \cite{W} (see also \cite{EZ} for a short proof) proved a sufficient condition for the existence of a spanning $k$-tree in a connected graph. Lots of scholars presented some sufficient spectral conditions for graphs to possess spanning $k$-tree. Since a spanning $2$-tree is just a Hamilton path, Ning and Ge \cite{NG} provide such condition for $k=2$ in terms of the adjacency spectral radius. In \cite{LSX}, Liu, Shiu and Xue gave a signless Laplacian spectral radius for the existence of a Hamilton path in a connected graph.
For any $k\geq3$, Fan et al. \cite{FGHL} presented a lower bound on the spectral radius of a connected graph $G$ to ensure that $G$ contains a spanning $k$-tree. Zhou and Wu \cite{ZW} showed a distance spectral radius condition which guarantees the existence of a spanning $k$-tree in a connected graph. Zhou, Zhang and Liu \cite{ZZL} studied the connection between the distance signless Laplacian spectral radius and the spanning
$k$-tree in a connected graph and verified an upper bound on the distance signless Laplacian spectral radius in a connected graph $G$ to ensure the existence of a spanning $k$-tree.

Recently, Liu, Fan and Shu \cite{LFS} proposed the following spectral conditions to ensure that connected $\frac{1}{k}$-tough and $\frac{1}{k-1}$-tough graphs contain a spanning $k$-tree, where $k\geq3$ is an integer.\\

\textbf{[Theorem 1.3. in \cite{LFS}]}
Let $G$ be a connected $\frac{1}{k-\eta}$-tough graph of order $n$ with $k\geq3$ and $\eta=\{0, 1\}$. Each of the following holds. \\
\begin{enumerate}[(1)]
\item
If $n\geq 8k+12$ and $\rho(G)\geq\rho(K_{\eta+2}\vee(K_{n-(k-1)(\eta+2)-2}\cup((k-2)(\eta+2)+2)K_{1}))$, then $G$ contains a spanning $k$-tree unless $G\cong K_{\eta+2}\vee(K_{n-(k-1)(\eta+2)-2}\cup ((k-2)(\eta+2)+2)K_{1})$.

\item
If $n\geq11k+47$ and $q(G)\geq q(K_{\eta+2}\vee(K_{n-(k-1)(\eta+2)-2}\cup ((k-2)(\eta+2)+2)K_{1}))$, then $G$ contains a spanning $k$-tree unless $G\cong K_{\eta+2}\vee(K_{n-(k-1)(\eta+2)-2}\cup ((k-2)(\eta+2)+2)K_{1})$.
\end{enumerate}

Based on the above, a natural and interesting problem arises:\\
\noindent\begin{problem}\label{pro:1.2.}
Can the value of $\tau$ be refined?\\
\end{problem}

Concerning Problem \ref{pro:1.2.}, we prove that if $\frac{1}{k-2}>\tau\geq\frac{1}{k-1}$ with $k\geq3$, then $G$ contains a spanning $k$-tree.
We initially establish a lower bound on the size to ensure a connected $\frac{t}{t(k-2)+1}$-tough graph $G$ contains a spanning $k$-tree, where $k\geq3$ and $t\geq1$ are integers.

\noindent\begin{theorem}\label{th:1.1.}
Let $G$ be a connected $\frac{t}{t(k-2)+1}$-tough graph with $n$ vertices, where $k\geq 3$ and $t\geq 1$ are integers. If $n\geq\frac{3tk^3-(9t-2)k^2+(6t-5)k+6}{(k-2)^2}$ and
$$e(G)>\left(
\begin{array}{ccccccccc}
n-3t(k-2)-2 \\
2
\end{array}
\right)+3t(3t(k-2)+2),$$
then $G$ contains a spanning $k$-tree.
\end{theorem}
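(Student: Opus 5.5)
Argue by contradiction. Suppose $G$ has no spanning $k$-tree. By Lemma \ref{le:2.1.} there is a subset $S\subseteq V(G)$ with $c(G-S)\geq (k-2)|S|+3$. Put $s=|S|$ and $c=c(G-S)$. Since $G$ is connected, $s\geq 1$, and $c\geq k+1\geq 2$, so the toughness hypothesis gives $s\geq \frac{t}{t(k-2)+1}\,c\geq \frac{t}{t(k-2)+1}\bigl((k-2)s+3\bigr)$. Rearranging yields $s\bigl(t(k-2)+1\bigr)\geq t(k-2)s+3t$, that is, $s\geq 3t$. This lower bound on $|S|$ is the only place toughness is used. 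It is what replaces the small value $\eta+2$ in the extremal graph of \cite{LFS} by $3t$.

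Next, bound $e(G)$ from above. Adding edges inside $S$, inside components, and between $S$ and everything cannot decrease $e(G)$. It also keeps $c(G-S)$ unchanged, and we may merge components until exactly $c=(k-2)s+3$ remain. So $e(G)\leq e(H)$ with $H=K_s\vee(K_{n_1}\cup\cdots\cup K_{n_c})$. Moving vertices from smaller cliques into the largest one increases the edge count (a standard convexity argument). Hence
$$e(G)\leq e\bigl(K_s\vee(K_{n-s-(k-2)s-2}\cup((k-2)s+2)K_1)\bigr)=\binom{n-(k-1)s-2}{2}+s\bigl((k-1)s+2\bigr)+\binom{s}{2}.$$
Write $f(s)$ for this quantity. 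Here $s$ ranges over $3t\leq s\leq \frac{n-3}{k-1}$, since the big clique must be nonempty.

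Now show that $f(s)\leq f(3t)$ on this range. Check that $f(3t)=\binom{n-3t(k-2)-2}{2}+3t(3t(k-2)+2)$ up to the exact bookkeeping of the $\binom{s}{2}$ term and the join edges, so the paper's bound corresponds to this extremal graph. Then $e(G)\leq f(3t)$, contradicting the hypothesis. The function $f$ is a quadratic in $s$ with positive leading coefficient: the coefficient of $s^2$ is $\frac{(k-1)^2}{2}+(k-1)+\frac12>0$. So it suffices to compare the two endpoints, that is, to show $f\bigl(\frac{n-3}{k-1}\bigr)\leq f(3t)$. The right endpoint is essentially $K_s\vee ((k-2)s+3)K_1$, with roughly $\frac{n^2}{2(k-1)^2}$-type edges, whereas $f(3t)\approx \frac{n^2}{2}$.

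The main obstacle is this endpoint comparison with the precise threshold $n\geq\frac{3tk^3-(9t-2)k^2+(6t-5)k+6}{(k-2)^2}$. Compute $f(3t)-f\bigl(\frac{n-3}{k-1}\bigr)$ as a quadratic polynomial in $n$. Its leading coefficient is positive (it involves $(k-2)^2$, matching the denominator). Then show that the polynomial is nonnegative once $n$ exceeds the stated bound, presumably by factoring out $(n-3t(k-1)-3)$ or a similar root. If the endpoint $s$ is not an integer, replace it by the largest admissible integer; this only helps. The case $k=3$ needs no separate treatment, since $(k-2)^2=1$ there.
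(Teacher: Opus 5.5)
Your route is the paper's. Win's condition gives $S$ with $c(G-S)\geq (k-2)s+3$. Toughness forces $s\geq 3t$. Then $e(G)$ is bounded by the edge count of $K_s\vee(K_{n-(k-1)s-2}\cup((k-2)s+2)K_1)$, which is compared with the case $s=3t$ over $3t\leq s\leq\frac{n-3}{k-1}$.

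The one real error is your displayed edge count, which is wrong, not merely ``up to bookkeeping''. The join contributes $s(n-s)$ edges. Your expression replaces the $s\bigl(n-(k-1)s-2\bigr)$ edges between $S$ and the large clique by a spurious $s^2$, so it differs from the true count by $s(n-ks-2)$. As written, $f(3t)$ is not the hypothesis bound, and $e(G)\leq f(s)$ is not justified. Since $K_s\vee K_{n-(k-1)s-2}$ is a clique on $n-(k-2)s-2$ vertices, the correct count is
\[
h(s)=\binom{n-(k-2)s-2}{2}+s\bigl((k-2)s+2\bigr).
\]
So $h(3t)$ is exactly the bound in the statement. The $s^2$-coefficient is $\frac{k(k-2)}{2}>0$ (not $\frac{k^2}{2}$), so your convexity step survives.

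With $h$ corrected, your guessed factorization is exactly right. Writing $C=3tk^3-(9t-2)k^2+(6t-5)k+6$,
\[
h(3t)-h\Bigl(\frac{n-3}{k-1}\Bigr)=\frac{\bigl(n-3t(k-1)-3\bigr)\bigl((k-2)^2n-C\bigr)}{2(k-1)^2}.
\]
The first factor is nonnegative because $n\geq (k-1)s+3\geq 3t(k-1)+3$, and the second is nonnegative by the hypothesis on $n$.

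The paper does the same computation without invoking convexity. It factors
\[
h(3t)-h(s)=\tfrac12(s-3t)\bigl(2(k-2)n-(k^2-2k)s-3tk^2+(6t-5)k+6\bigr),
\]
then bounds the linear second factor below by its value $\frac{(k-2)^2n-C}{k-1}$ at $s=\frac{n-3}{k-1}$. The two presentations are equivalent. No rounding of the endpoint is needed, since convexity on the real interval already covers every integer $s$ in it.
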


Meanwhile, we provide two sufficient conditions in terms of spectral radius and signless Laplacian spectral radius for a connected $\frac{t}{t(k-2)+1}$-tough graph $G$ contains a spanning $k$-tree, where $k\geq3$ and $t\geq1$ are integers. When $t=1$, we obtain the result $\eta=1$ from Liu, Fan and Shu.

\noindent\begin{theorem}\label{th:1.2.}
Let $G$ be a connected $\frac{t}{t(k-2)+1}$-tough graph with $n$ vertices, where $k\geq 3$ and $t\geq 1$ are integers. If $n\geq 3kt+k-3t+6$ and
$$\rho(G)\geq\rho(K_{3t}\vee(K_{n-3t(k-1)-2}\cup(3t(k-2)+2)K_1)),$$
then $G$ contains a spanning $k$-tree unless $G\cong K_{3t}\vee(K_{n-3t(k-1)-2}\cup(3t(k-2)+2)K_1)$.
\end{theorem}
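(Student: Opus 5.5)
We argue by contradiction. Suppose $G$ has no spanning $k$-tree. By Lemma \ref{le:2.1.} there is a vertex subset $S$ with $c(G-S)\geq (k-2)|S|+3$. Put $s=|S|$ and $q=c(G-S)$. If $s=0$ then $c(G)\geq 3$, which contradicts connectivity; hence $s\geq1$ and $q\geq 2$.

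The first step uses toughness to bound $s$ from below. Since $G$ is $\frac{t}{t(k-2)+1}$-tough, we have $s\geq \frac{t}{t(k-2)+1}\,q\geq \frac{t((k-2)s+3)}{t(k-2)+1}$. Multiplying out gives $s(t(k-2)+1)\geq t(k-2)s+3t$, so $s\geq 3t$.

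The second step is the standard edge-maximal embedding. Adding edges inside $S$, inside components, and between $S$ and $V(G)-S$ does not decrease $q$. Since $\rho$ is strictly monotone under adding edges to a connected graph, $G$ is a spanning subgraph of
$$G_1=K_s\vee(K_{n_1}\cup K_{n_2}\cup\cdots\cup K_{n_q}),\qquad n_1\geq\cdots\geq n_q\geq1,\ \sum n_i=n-s,$$
with $q=(k-2)s+3$ exactly; if $q$ were larger, merging components would again only increase $\rho$. By the usual Kelmans-type or eigenvector-moving argument, $\rho(G_1)\leq\rho(G_2)$, where $G_2=K_s\vee(K_{n-s-q+1}\cup (q-1)K_1)$, with equality only when $G_1\cong G_2$. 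The proof moves all vertices of the small cliques into the largest one.

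The third step, which I expect to be the main obstacle, is to compare $G_2$ for $s\geq 3t$ against $s=3t$. Here $q-1=(k-2)s+2$, so with $s=3t$ we get $G_2=K_{3t}\vee(K_{n-3t(k-1)-2}\cup(3t(k-2)+2)K_1)$, the extremal graph, and we must show $\rho(G_2(s))<\rho(G_2(3t))$ for $s>3t$. I would let $x$ be the Perron vector of $G_2(s)$, with entry $x_1$ on $S$, $x_2$ on the isolated vertices and $x_3$ on the large clique. Then $G_2(3t)$ is obtained from $G_2(s)$ by deleting edges from $s-3t$ vertices of $S$ to the $(k-1)(s-3t)$ vertices that become pendant-free, and adding edges to turn those vertices into clique vertices. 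I would bound $\rho(G_2(3t))-\rho(G_2(s))\geq x^T(A(G_2(3t))-A(G_2(s)))x/x^Tx$ and show positivity using $x_3\gg x_2$, which follows from the eigen-equations $\rho x_2=s x_1$ and $(\rho-(n-s-q))x_3=sx_1$. As an alternative, I would compute the equitable-partition quotient polynomial $f_s(x)$ (a cubic) and show that $f_s(\rho(G_2(3t)))>0$ and that $\rho(G_2(3t))$ exceeds the other roots, using $\rho(G_2(3t))>n-3t(k-1)-3$ since $K_{n-3t(k-1)+3t-2}$ is a subgraph. The hypothesis $n\geq 3kt+k-3t+6$ enters exactly here, to make the cubic-difference estimates valid, and I also need to handle $s$ up to $(n-3)/(k-1)$. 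Combining the three steps, $\rho(G)\leq\rho(G_2(3t))$, with equality forcing $G\cong K_{3t}\vee(K_{n-3t(k-1)-2}\cup(3t(k-2)+2)K_1)$. This contradicts the hypothesis unless $G$ is that graph.
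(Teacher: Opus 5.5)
\textbf{Verdict: genuine gap at Step 3.} Your Steps 1 and 2 are exactly the paper's reduction. You use Win's lemma, get $s\ge 3t$ from toughness, embed $G$ in $G_1$, and shift to $G_2(s)=K_s\vee(K_{n-(k-1)s-2}\cup((k-2)s+2)K_1)$ via Lemma~\ref{le:4.2.}.

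Everything then rests on Step 3, which you only outline. The inequality you aim to prove there, $\rho(G_2(s))<\rho(G_2(3t))$ for all $3t<s\le\frac{n-3}{k-1}$, is false under the hypothesis $n\ge 3kt+k-3t+6$. My numerical values below come from quotient polynomials of the equitable partitions; please re-verify them.

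Take $k=3$, $t=1$, $n=15$.
\begin{itemize}
\item The quotient polynomial of $G_2(3)=K_3\vee(K_7\cup 5K_1)$ is $\lambda^3-8\lambda^2-24\lambda+90$. It is increasing for $\lambda>6.6$ and positive at $9.55$, so $\rho(G_2(3))<9.55$.
\item The value $s=6$ is admissible, since $n=(k-1)s+3$. Then $G_2(6)=K_6\vee 9K_1$ has $\rho=\frac{5+\sqrt{241}}{2}\approx 10.26$.
\item Already $G_2(5)=K_5\vee(K_3\cup 7K_1)$ has $\rho>9.6$.
\end{itemize}
The failure is not confined to $t=1$ or to large $s$. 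For $k=3$, $t=10$, $n=69$ the quotient polynomials give $\rho(G_2(31))>52.6>52.1>\rho(G_2(30))$.

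So neither the Rayleigh-quotient transfer nor the cubic comparison can be carried out. The heuristic $x_3\gg x_2$ is exactly what breaks. We have $x_3/x_2=\rho/(\rho-m+1)$ with $m=n-(k-1)s-2$, which is near $1$ when the clique part is small. Meanwhile the deleted edges carry weight $x_1x_2$, with $x_1/x_2=\rho/s$ well above $1$. Those edges run from the $s-3t$ relocated vertices of $S$ to the $3t(k-2)+2$ vertices that remain isolated.

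The paper's own Step 3 does not rescue this. It bounds $\rho(G_2(s))\le\sqrt{2e(G_2)-n+1}$, uses convexity in $s$, and concludes $\rho(G_2(s))<n-3t(k-2)-3=\rho(K_{n-3t(k-2)-2})$ for $s\ge 3t+1$. Its expression for $2e(G_2)$ is incorrect. By direct counting, the correct value is $k(k-2)s^2-(2(k-2)n-5k+6)s+n^2-5n+6$. The conclusion also fails already at $s=3t+1$: for $k=3$, $t=1$, $n=15$ one has $\rho(K_4\vee(K_5\cup 6K_1))\approx 9.33>9$.

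To make this route work you would need either a substantially stronger lower bound on $n$ or a genuinely new ingredient. For comparison, Liu, Fan and Shu assume $n\ge 8k+12$ in the case $t=1$. A natural new ingredient is the following. Win's condition is only sufficient, and in fact every $G_2(s)$ with $s\ge 2$ has a spanning $k$-tree, including the purported extremal graph $G_2(3t)$. To see this:
\begin{enumerate}
\item Split $S$ into two paths.
\item Hang the $(k-2)s+2$ isolated vertices on them as leaves; this uses exactly the spare degree.
\item Attach the two paths to the two ends of a Hamiltonian path of the clique.
\end{enumerate}
So bounding $\rho(G)$ by $\rho(G_2(s))$ discards precisely the information that separates graphs without spanning $k$-trees from those with them.
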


\noindent\begin{theorem}\label{th:1.3.}
Let $G$ be a connected $\frac{t}{t(k-2)+1}$-tough graph with $n$ vertices, where $k\geq 3$ and $t\geq 1$ are integers. If $n\geq 3kt^2+k+18t+4$ and
$$q(G)\geq q(K_{3t}\vee(K_{n-3t(k-1)-2}\cup(3t(k-2)+2)K_1)),$$
then $G$ contains a spanning $k$-tree unless $G\cong K_{3t}\vee(K_{n-3t(k-1)-2}\cup(3t(k-2)+2)K_1)$.
\end{theorem}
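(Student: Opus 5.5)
We argue by contradiction. Suppose $G$ satisfies the hypotheses of Theorem~\ref{th:1.3.} but has no spanning $k$-tree. By Lemma~\ref{le:2.1.} there is a subset $S\subseteq V(G)$ with $c(G-S)\geq (k-2)|S|+3$. Put $s=|S|$ and $c=c(G-S)$. Since $G$ is connected, $s\geq 1$, and so $c\geq 2$.

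\emph{Lower bound on $s$.} Toughness gives $s\geq \frac{t}{t(k-2)+1}\,c\geq \frac{t}{t(k-2)+1}\bigl((k-2)s+3\bigr)$. Rearranging, $s\bigl(t(k-2)+1\bigr)\geq t(k-2)s+3t$, so $s\geq 3t$.

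\emph{Reduction to an extremal family.} Adding edges to $G$ never decreases $q(G)$, and strictly increases it when $G$ is connected. So we may assume:
\begin{enumerate}[(i)]
\item $S$ is a clique joined to all of $V(G)-S$;
\item $G-S$ has exactly $(k-2)s+3$ components, obtained by merging any surplus components;
\item each component of $G-S$ is a clique.
\end{enumerate}
The result is a graph $G'\supseteq G$ with $q(G')\geq q(G)$. Equality $G'=G$ must hold in the final comparison to get the exceptional case. By a standard Kelmans-type transformation, namely moving all vertices except one from the small cliques into the largest clique and comparing the Perron vectors, $q$ increases strictly unless all but one component is a single vertex. Hence
\[ q(G)\leq q(G'')\quad\text{with}\quad G''=K_s\vee\bigl(K_{n-s-(k-2)s-2}\cup((k-2)s+2)K_1\bigr), \]
with equality only if $G\cong G''$.

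\emph{Comparison across $s$ (main obstacle).} It remains to show that $q(G'')<q(K_{3t}\vee(K_{n-3t(k-1)-2}\cup(3t(k-2)+2)K_1))$ for every $s\geq 3t+1$. The case $s=3t$ gives exactly the extremal graph, which yields the stated exception. For $s\geq 3t+1$, I would take the equitable quotient matrix of $G''$ with respect to the partition (clique $S$, big clique, isolated vertices). Its largest root is $q(G'')$, and it is the largest root of a cubic $f_s(x)$.

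Two regimes need handling. If $s$ is large, say $s\geq n/(2k)$ or so, a crude bound such as $q(G'')\leq 2\Delta$, or the bound $q\leq \max_v\bigl(d_v+m_v\bigr)$ where $m_v$ is the average degree of the neighbours of $v$, is below $2(n-3t(k-1)-2)+$const. That quantity is itself a lower bound on the extremal $q$, since the extremal graph contains $K_{n-3t(k-1)+3t-2}$, whose $q$ is $2(n-3t(k-1)+3t-3)$. This step is where $n\geq 3kt^2+k+18t+4$ is used.

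If $3t+1\leq s$ is small, I would evaluate $f_s-f_{3t}$ at $x=q_{3t}$, or equivalently show $f_s(x)>0$ for all $x\geq q_{3t}$. To do so, I would bound $q_{3t}>2(n-3t(k-2)-3)$ and check the sign of $f_s$ and its derivative beyond that point.

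The delicate part is this algebra: controlling the sign of the cubic difference uniformly in $s$ and $t$ using the order bound. That is where I expect the most work.
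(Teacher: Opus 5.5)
\textbf{Verdict: genuine gap.} Your reduction and your treatment of $s=3t$, including the equality case, match the paper's use of Lemmas~\ref{le:5.1.} and~\ref{le:5.2.}. (The reduction is: $S$ a clique joined to everything, exactly $(k-2)s+3$ clique components, then the Kelmans-type move to $G''_s=K_s\vee(K_{n-(k-1)s-2}\cup((k-2)s+2)K_1)$.) The gap is the step that carries the real content of the theorem: showing $q(G''_s)<q(G''_{3t})$ for every $s\ge 3t+1$. Here you only describe what you would try, and the tools you name do not work as stated:
\begin{itemize}
\item The bound $q\le 2\Delta$ is useless. Every vertex of $S$ is universal in $G''_s$, so $2\Delta=2n-2>2(n-3t(k-2)-3)$.
\item The threshold ``$s\ge n/(2k)$ or so'' is never justified, and you never check that $\max_v(d_v+m_v)$ falls below the target on that range.
\item For small $s$ you neither write down the cubic $f_s$ nor verify any sign condition.
\end{itemize}
As it stands, nothing is proved for $s\ge 3t+1$.

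\textbf{The paper's idea.} It closes this step with a single edge-count bound valid for all $s$: Das's inequality $q(H)\le\frac{2e(H)}{n-1}+n-2$ (Lemma~\ref{le:5.3.}, whose inequality sign is misprinted). Your $d_v+m_v$, evaluated at a universal vertex $v\in S$, equals exactly this quantity. Since $e(G''_s)$ is a convex quadratic in $s$, the bound on $3t+1\le s\le\frac{n-3}{k-1}$ is maximized at an endpoint. The paper then compares it with $q(K_{n-3t(k-2)-2})=2(n-3t(k-2)-3)$, this clique being a proper subgraph of the extremal graph. No cubic and no regime split are needed.

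\textbf{Caveat: the clique target needs large $n$.} Your plan also aims at this clique target, but it only works when $n$ is large.
\begin{itemize}
\item Since $q(G''_s)\ge\Delta+1=n$, the target requires $n\ge 6t(k-2)+6$. The hypothesis does not give this for $t=1$, $k\ge 15$. For example, $k=20$, $n=102$ gives target $90<102$.
\item At $s=3t+1$, Das's bound exceeds the target unless $n-1\ge\frac{((k-2)(3t+1)+2)(k(3t+1)+1)}{2(k-2)}$. For $k=3$, $t=1$, $n=34$ it gives $\frac{804}{33}+32\approx 56.4>56$, whereas the quotient matrix gives $q(G''_4)\approx 55.1$.
\end{itemize}
So near the bottom of the stated range you genuinely need the direct comparison you mention: showing $f_s(x)>0$ for $x\ge q_{3t}$, i.e.\ comparing $f_s$ with $f_{3t}$ at $q_{3t}$ rather than with the clique value. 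That computation still has to be carried out.
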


\section{Proof of Theorem \ref{th:1.1.} }
\hspace{1.3em}

In this section, we give the proof of Theorem \ref{th:1.1.}. Before doing this, we need the following lemma.

\noindent\begin{lemma}\label{le:3.1.}\cite{CLX}
Let $n=\sum\limits_{i=1}^{t}n_{i}+s$. If $n_{1}\geq n_{2}\geq\cdots\geq n_{t}\geq1$ and $n_{1}\leq n-s-t+1$. Then
\begin{align*}
e(K_{s}\vee(K_{n_{1}}\cup K_{n_{2}}\cup\cdots\cup K_{n_{t}}))\leq e(K_{s}\vee(K_{n-s-t+1}\cup(t-1) K_{1})).
\end{align*}
\end{lemma}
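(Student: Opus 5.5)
Since the join with $K_s$ contributes the same number of edges for every choice of the parts, I would reduce the inequality to a statement about sums of binomial coefficients and then use a convexity (vertex-shifting) argument. Write $m=n-s=\sum_{i=1}^{t}n_i$. Counting edges inside $K_s$, between $K_s$ and the rest, and inside each clique gives
\begin{align*}
e(K_{s}\vee(K_{n_{1}}\cup\cdots\cup K_{n_{t}}))=\binom{s}{2}+sm+\sum_{i=1}^{t}\binom{n_i}{2}.
\end{align*}
The right-hand side of the lemma is the same expression evaluated at $(n-s-t+1,1,\dots,1)$. This is itself an admissible partition of $m$ into $t$ positive parts, since $n-s-t+1\geq 1$ follows from $n_i\geq 1$ for all $i$. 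So the first two terms cancel, and it suffices to show
\begin{align*}
f(n_1,\dots,n_t):=\sum_{i=1}^{t}\binom{n_i}{2}\leq\binom{m-t+1}{2}
\end{align*}
for every $t$-tuple of positive integers with sum $m$.

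The key step is a single transfer move. Suppose some index $j\geq 2$ has $n_j\geq 2$. Move one vertex from the $j$-th part to the first part, replacing $n_1$ by $n_1+1$ and $n_j$ by $n_j-1$. The sum is preserved and all parts stay at least $1$. The change in $f$ is
\begin{align*}
\binom{n_1+1}{2}-\binom{n_1}{2}+\binom{n_j-1}{2}-\binom{n_j}{2}=n_1-(n_j-1)\geq 1,
\end{align*}
which is positive because $n_1\geq n_j$. After the move the first part is still the largest, so the move can be repeated.

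Each move strictly decreases $\sum_{j\geq 2}n_j$, so the process terminates. It stops exactly when $n_2=\cdots=n_t=1$, and then $n_1=m-t+1=n-s-t+1$. Since $f$ never decreases along the way, the original value of $f$ is at most $\binom{n-s-t+1}{2}$, which proves the lemma. The hypothesis $n_1\leq n-s-t+1$ is automatic and only confirms that the extremal configuration is admissible.

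There is no real obstacle here. The only points that need care are checking that the transfer keeps every part positive and that it keeps the first part maximal, so that the ordering hypothesis $n_1\geq n_2\geq\cdots\geq n_t$ stays valid through the iteration.
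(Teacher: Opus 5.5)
Your proof is correct: the join contributes the same $\binom{s}{2}+s(n-s)$ edges to both sides, each transfer raises $\sum_i\binom{n_i}{2}$ by $n_1-n_j+1\geq 1$, and the process ends exactly at $(n-s-t+1,1,\dots,1)$. The paper gives no proof of this lemma; it quotes it from \cite{CLX}, and your vertex-shifting argument is the standard proof of such a statement.

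One small slip: your closing remark says the whole ordering $n_1\geq n_2\geq\cdots\geq n_t$ is preserved. That can fail among $n_2,\dots,n_t$, for example $(5,3,3)\to(6,2,3)$. Your computation only needs $n_1$ to remain the largest part, which you did check. Alternatively, always take $j$ to be the last index with $n_j\geq 2$, which keeps the full ordering.
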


\noindent\textbf{Proof of Theorem \ref{th:1.1.}.}

Suppose that a connected $\frac{t}{t(k-2)+1}$-tough graph $G$ has no spanning $k$-tree, where $t\geq1$ is an integer and $k\geq1$. By Lemma \ref{le:2.1.}, we have
\begin{align}
c(G-S)\geq(k-2)|S|+3
\end{align}
for some nonempty subset $S$ of $V(G)$. According to the definition of $\frac{t}{t(k-2)+1}$-tough graphs, we obtain
\begin{align}
\frac{|S|}{c(G-S)}\geq \tau(G)\geq\frac{t}{t(k-2)+1}.
\end{align}
Using $(1)$ and $(2)$, we conclude
$$(t(k-2)+1)|S|\geq t\cdot c(G-S)\geq t((k-2)|S|+3),$$
and hence $|S|\geq3t$. Let $|S|=s$. We know $G$ is a spanning subgraph of $G_1=K_{s}\vee(K_{n_{1}}\cup K_{n_{2}}\cup\cdots\cup K_{n_{(k-2)s+3}})$ for some positive integers $n_{1}\geq n_{2}\geq\cdots\geq n_{(k-2)s+3}$ and $\sum\limits_{i=1}^{(k-2)s+3}n_{i}=n-s$. Therefore, we have
$$e(G)\leq e(G_1).$$
Let $G_2=K_{s}\vee(K_{n-(k-1)s-2}\cup((k-2)s+2) K_{1})$, we obtain
$$e(G_1)\leq e(G_2)$$
by Lemma \ref{le:3.1.}.
Let $G_3=K_{3t}\vee(K_{n-3t(k-1)-2}\cup(3t(k-2)+2) K_{1})$. Note that $e(G_3)=\left(
\begin{array}{ccccccccc}
n-3t(k-2)-2 \\
2
\end{array}
\right)+3t(3t(k-2)+2)$. If $s=3t$, then $G_2=G_3$. We get
$$e(G)\leq e(G_1)\leq e(G_2)=e(G_3)=\left(
\begin{array}{ccccccccc}
n-3t(k-2)-2 \\
2
\end{array}
\right)+3t(3t(k-2)+2),$$ a contradiction. Next, we consider $s\geq3t+1.$ Together with $n\geq\frac{3tk^3-(9t-2)k^2+(6t-5)k+6}{(k-2)^2}$, we have
\begin{align*}
&\left(
\begin{array}{ccccccccc}
n-3t(k-2)-2 \\
2
\end{array}
\right)+3t(3t(k-2)+2)-e(G_2)
\\=&\left(
\begin{array}{ccccccccc}
n-3t(k-2)-2 \\
2
\end{array}
\right)+3t(3t(k-2)+2)-\left(
\begin{array}{ccccccccc}
n-(k-2)s-2 \\
2
\end{array}
\right)-s(s(k-2)+2)
\\=&\frac{1}{2}(s-3t)(2(k-2)n-(k^2-2k)s-3tk^2+(6t-5)k+6)
\\ \geq&\frac{1}{2}(s-3t)(2(k-2)n-(k^2-2k)\frac{n-3}{k-1}-3tk^2+(6t-5)k+6)
\\=&\frac{s-3t}{2(k-1)}((k-2)^2n-(3tk^3-(9t-2)k^2+(6t-5)k+6))
\\ \geq&0.
\end{align*}
We get
$$e(G)\leq e(G_1)\leq e(G_2)\leq\left(
\begin{array}{ccccccccc}
n-3t(k-2)-2 \\
2
\end{array}
\right)+3t(3t(k-2)+2),$$ a contradiction.

This completes the proof.
$\hfill\square$\\

\section{Proof of Theorem \ref{th:1.2.} }
\hspace{1.3em}

Li and Feng \cite{LF} obtained a fundamental result to compare the adjacency spectral radius of a graph and its subgraph.

\noindent\begin{lemma}\label{le:4.1.}\cite{LF}
 Let $G$ be a connected graph and let $H$ be a subgraph of $G$. Then
\begin{align*}
\rho(G)\geq\rho(H),
\end{align*}
where the equality holds if and only if $G=H$.
\end{lemma}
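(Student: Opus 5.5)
The plan is to reduce to the case of a spanning subgraph, compare the two adjacency matrices entrywise, and then combine the Rayleigh quotient characterization of the spectral radius with the Perron--Frobenius theorem.

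First I make $H$ spanning. Let $n=|V(G)|$ and let $H'$ be the spanning subgraph of $G$ with $V(H')=V(G)$ and $E(H')=E(H)$. In other words, $H'$ is $H$ together with the vertices of $V(G)\setminus V(H)$ added as isolated vertices. The spectrum of $H'$ is that of $H$ with some zeros appended. Since $\rho(H)\geq 0$, this gives $\rho(H')=\rho(H)$. With a common ordering of the vertices, $A(H')\leq A(G)$ entrywise, and both matrices are symmetric and nonnegative.

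Next I prove the inequality. By Perron--Frobenius for nonnegative symmetric matrices (no connectivity is needed at this point), there is a unit vector $x\geq 0$ with $A(H')x=\rho(H')x$. The Rayleigh quotient characterization $\rho(G)=\max_{\|y\|=1}y^{T}A(G)y$ then gives
$$\rho(G)\geq x^{T}A(G)x=x^{T}A(H')x+x^{T}\bigl(A(G)-A(H')\bigr)x\geq x^{T}A(H')x=\rho(H).$$
The middle term is nonnegative because both $x$ and $A(G)-A(H')$ are entrywise nonnegative.

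For the equality case, suppose $\rho(G)=\rho(H)$. Then both inequalities in the display are equalities.
\begin{enumerate}[(1)]
\item Equality in the first inequality means $x$ attains the maximum of the Rayleigh quotient, so $x$ is an eigenvector of $A(G)$ for $\rho(G)$.
\item Because $G$ is connected, $A(G)$ is irreducible. By Perron--Frobenius its $\rho(G)$-eigenspace is spanned by a strictly positive vector. Hence $x>0$ entrywise.
\item Equality in the second inequality gives $\sum_{i,j}x_ix_j\bigl(a_{ij}(G)-a_{ij}(H')\bigr)=0$. Every term is nonnegative and every $x_ix_j>0$, so $A(G)=A(H')$, that is, $E(G)=E(H)$.
\item It remains to show $V(H)=V(G)$. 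If some vertex $v$ of $G$ were not in $H$, then $v$ is isolated in $H'$. But $v$ has a neighbour in $G$ (connectivity, for $n\geq 2$), which contradicts $A(G)=A(H')$. The case $n=1$ is trivial.
\end{enumerate}
Therefore $G=H$. The converse of the equality case is immediate.

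The only delicate step is the equality case. Equality in the Rayleigh bound must be converted into $x$ being a Perron vector of $A(G)$, and hence strictly positive. This uses the connectivity of $G$ in an essential way; without it, $x$ could vanish on a component, and extra edges there would go undetected.
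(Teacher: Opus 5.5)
Your proof is correct. The paper gives no proof of this lemma; it is quoted from Li and Feng, and your argument is essentially the classical Perron--Frobenius monotonicity proof behind that citation. You first extend $H$ by isolated vertices to a spanning subgraph, which does not change $\rho$ because $\rho(H)\ge 0$. You then bound $\rho(G)$ from below by the Rayleigh quotient of $A(G)$ at a nonnegative Perron vector $x$ of $A(H')$. In the equality case, $x$ maximizes the Rayleigh quotient of $A(G)$, so irreducibility of $A(G)$ forces $x>0$, and this detects every missing edge.

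Writing the argument out, rather than citing it, makes three things explicit:
\begin{enumerate}[(1)]
\item Connectivity of $G$ is used only for the strictness part.
\item $H$ need be neither connected nor spanning.
\item Through your step (4), a proper non-spanning subgraph also gives strict inequality.
\end{enumerate}
Point (3) is exactly the case the paper relies on at the end of the proof of Theorem \ref{th:1.2.}. There $K_{n-3t(k-2)-2}$ is treated as a proper subgraph of $K_{3t}\vee(K_{n-3t(k-1)-2}\cup(3t(k-2)+2)K_1)$ to obtain a strict inequality.
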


Fan et al. gave the following lemma in \cite{FGHL}.

\noindent\begin{lemma}\label{le:4.2.}\cite{FGHL}
Let $n=\sum\limits_{i=1}^{t}n_{i}+s$. If $n_{1}\geq n_{2}\geq\cdots\geq n_{t}\geq p$ and $n_1<n-s-p(t-1)$. Then
\begin{align*}
\rho(K_{s}\vee(K_{n_{1}}\cup K_{n_{2}}\cup\cdots\cup K_{n_{t}}))<\rho(K_s\vee(K_{n-s-p(t-1)\cup(t-1)K_{p}})).
\end{align*}
\end{lemma}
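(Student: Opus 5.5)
The plan is the standard ``vertex-moving'' argument. I will show that whenever two of the cliques $K_{n_i}$ and $K_{n_j}$ with $n_i\ge n_j\ge p+1$ are present, transferring one vertex from $K_{n_j}$ to $K_{n_i}$ strictly increases the spectral radius. Iterating this transfer into the first clique turns $K_{s}\vee(K_{n_{1}}\cup\cdots\cup K_{n_{t}})$ into $K_s\vee(K_{n-s-p(t-1)}\cup(t-1)K_{p})$.

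\textbf{Reduction.} Write $H=K_{s}\vee(K_{n_{1}}\cup\cdots\cup K_{n_{t}})$. Since $\sum_{i=1}^t n_i=n-s$ and $n_1<n-s-p(t-1)$, we have $\sum_{i\ge2}n_i>p(t-1)$. Hence some $j\ge2$ has $n_j\ge p+1$; in particular $t\ge2$, so the case $t=1$ is vacuous. Move one vertex $v$ from $K_{n_j}$ to $K_{n_1}$. The resulting graph $H'$ is again of the same form, with all parts still $\ge p$ and the first part still the largest. Each move increases $n_1$ by one, so after finitely many moves we reach $n_1=n-s-p(t-1)$, i.e.\ the target graph. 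It therefore suffices to prove $\rho(H')>\rho(H)$ for a single move.

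\textbf{Single move.} Let $x$ be the Perron vector of $H$ (positive, unit length, with $\rho=\rho(H)$). By symmetry $x$ is constant on $S$, say equal to $y$, and constant on each clique $K_{n_i}$, say equal to $x_i$. The eigen-equation at a vertex of $K_{n_i}$ reads $\rho x_i=(n_i-1)x_i+sy$, so
\[
x_i=\frac{sy}{\rho-n_i+1}.
\]
Here $\rho>n_i-1$, because $H$ properly contains $K_{s+n_i}$ (use Lemma \ref{le:4.1.}). Thus $n_1\ge n_j$ gives $x_1\ge x_j>0$. Passing from $H$ to $H'$ deletes the $n_j-1$ edges from $v$ to $K_{n_j}-v$ and adds $n_1$ edges from $v$ to $K_{n_1}$. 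By the Rayleigh quotient,
\[
\rho(H')-\rho(H)\ge x^{T}(A(H')-A(H))x=2x_j\bigl(n_1x_1-(n_j-1)x_j\bigr)>0,
\]
since $n_1>n_j-1$ and $x_1\ge x_j$.

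\textbf{Main obstacle.} The only delicate point is strictness and keeping the invariants along the iteration. Strictness is immediate from the displayed strict inequality, so no equality analysis of the Perron vector of $H'$ is needed. The invariants are preserved because we only take vertices from parts of size $\ge p+1$ and only add them to the largest part.
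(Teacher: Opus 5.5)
Your argument is correct, and it is the standard Perron-vector vertex-transfer proof used in the cited source~\cite{FGHL}; the paper itself quotes this lemma without proof. One assumption is left implicit: you need $s\ge 1$, so that $H$ is connected and its Perron vector is positive. This holds in every application in the paper (there $s\ge 3t$), and the degenerate case $s=0$ is trivial, since then $\rho(H)=n_1-1<n-p(t-1)-1$.
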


Hong \cite{H} provided a upper bound on the adjacency spectral radius of a graph.

\noindent\begin{lemma}\label{le:4.3.}\cite{H}
Let $G$ be a graph with n vertices. Then
\begin{align*}
\rho(G)\geq\sqrt{2e(G)-n+1},
\end{align*}
where the equality holds if and only if $G$ is a star or a complete graph.
\end{lemma}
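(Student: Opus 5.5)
The plan is to prove the inequality in the direction printed in Lemma \ref{le:4.3.}, $\rho(G)\geq\sqrt{2e(G)-n+1}$, by the standard Rayleigh-quotient lower bounds for $\rho$, and to treat the equality cases separately.

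\textbf{Step 1: a lower bound on $\rho^2$.} Since $A(G)^2$ is symmetric and nonnegative, $\rho(G)^2=\rho(A^2)\geq \frac{\mathbf{1}^TA^2\mathbf{1}}{n}=\frac{1}{n}\sum_{v}d_G(v)^2$, which is Hofmeister's bound.

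\textbf{Step 2: reduce to a degree inequality.} It would then suffice to prove
\begin{align*}
\sum_{v}d_G(v)^2\geq n\,(2e(G)-n+1).
\end{align*}
I would try to obtain this from Cauchy--Schwarz, $\sum_v d_G(v)^2\geq \frac{4e(G)^2}{n}$, together with the elementary bounds $n-1\leq e(G)\leq\binom{n}{2}$ that hold for connected $G$.

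\textbf{Step 3: equality cases.} For the star $K_{1,n-1}$ one has $\rho=\sqrt{n-1}=\sqrt{2(n-1)-n+1}$. For $K_n$ one has $\rho=n-1=\sqrt{n(n-1)-n+1}$. So both claimed equality cases check out directly. For the converse, I would trace equality back through Step 1, which forces $\mathbf{1}$ to be an eigenvector of $A^2$, and through Step 2.

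\textbf{The main obstacle is Step 2, and I expect it to fail outright.} Already at $e(G)=n-1$, Cauchy--Schwarz only gives $\frac{4(n-1)^2}{n^2}$ for the average of the squared degrees, which is far below $n-1$ once $n>4$. Worse, the stated inequality itself fails on small examples:
\begin{itemize}
\item For $P_4$, $\rho=\frac{1+\sqrt5}{2}\approx1.618$, while $\sqrt{2\cdot3-4+1}=\sqrt3\approx1.732$.
\item For the cycle $C_n$, $\rho=2$, while $\sqrt{2n-n+1}=\sqrt{n+1}>2$ for $n\geq4$.
\end{itemize}
So the inequality cannot be proved as printed. My plan therefore stops at identifying this obstruction.

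What the sentence before the lemma calls an \emph{upper bound}, namely Hong's $\rho(G)\leq\sqrt{2e(G)-n+1}$ for graphs without isolated vertices, is the version that actually holds. It is also the version needed later: in the proof of Theorem \ref{th:1.2.}, it converts the spectral hypothesis into an edge-count condition comparable with Theorem \ref{th:1.1.}.
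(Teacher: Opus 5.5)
Your refutation is correct: $\rho(P_4)=\frac{1+\sqrt5}{2}<\sqrt3$, and $\rho(C_n)=2<\sqrt{n+1}$ for $n\ge4$. So the inequality as printed is false and cannot be proved. The paper gives no proof of its own; it only cites Hong. Both the sentence introducing the lemma (``an upper bound'') and the lemma's single use confirm that $\geq$ is a typo for $\leq$. That use is $\rho(G_2)\le\sqrt{2e(G_2)-n+1}$, in the case $s\ge 3t+1$ of the proof of Theorem~\ref{th:1.2.}.

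One refinement would make your diagnosis complete, because the hypothesis also needs repair. Even the upper bound fails if isolated vertices are allowed: $K_3\cup K_1$ has $\rho=2>\sqrt3$. The equality clause ``star or complete graph'' also needs connectivity, since $2K_2$ gives $\rho=1=\sqrt{2\cdot2-4+1}$. The right statement is: for connected $G$, $\rho(G)\le\sqrt{2e(G)-n+1}$, with equality iff $G\cong K_n$ or $G\cong K_{1,n-1}$. This is all the paper needs, because $G_2$ is connected.

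Its proof is short. Let $x$ be the positive unit Perron vector and $N[i]=N(i)\cup\{i\}$. Cauchy--Schwarz gives $\rho^2x_i^2\le d_i\sum_{j\sim i}x_j^2=d_i\bigl(1-x_i^2-\sum_{j\notin N[i]}x_j^2\bigr)$. Sum over $i$ and use $\sum_i d_i\sum_{j\notin N[i]}x_j^2=\sum_j x_j^2\sum_{i\notin N[j]}d_i\ge\sum_j x_j^2(n-1-d_j)$, which holds because all degrees are at least $1$. This gives $\rho^2\le 2e(G)-(n-1)$. Equality forces every non-neighbour of every vertex to have degree $1$, and with connectivity this leaves only $K_n$ and $K_{1,n-1}$.
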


\noindent\textbf{Proof of Theorem \ref{th:1.2.}.}

Suppose that a connected $\frac{t}{t(k-2)+1}$-tough graph $G$ has no spanning $k$-tree, where $t\geq1$ is an integer and $k\geq1$. By Lemma \ref{le:2.1.}, we have
\begin{align*}
c(G-S)\geq(k-2)|S|+3
\end{align*}
for some nonempty subset $S$ of $V(G)$. According to the definition of $\frac{t}{t(k-2)+1}$-tough graphs, we obtain
\begin{align*}
\frac{|S|}{c(G-S)}\geq \tau(G)\geq\frac{t}{t(k-2)+1}.
\end{align*}
We conclude
$$(t(k-2)+1)|S|\geq t\cdot c(G-S)\geq t((k-2)|S|+3),$$
and hence $|S|\geq3t$. Let $|S|=s$. We know $G$ is a spanning subgraph of $G_1=K_{s}\vee(K_{n_{1}}\cup K_{n_{2}}\cup\cdots\cup K_{n_{(k-2)s+3}})$ for some positive integers $n_{1}\geq n_{2}\geq\cdots\geq n_{(k-2)s+3}$ and $\sum\limits_{i=1}^{(k-2)s+3}n_{i}=n-s$. By Lemma \ref{le:4.1.}, we have
$$\rho(G)\leq \rho(G_1).$$
Let $G_2=K_{s}\vee(K_{n-(k-1)s-2}\cup((k-2)s+2) K_{1})$, where $n\geq(k-1)s+3$. By Lemma \ref{le:4.2.}, we obtain
$$\rho(G_1)< \rho(G_2).$$
If $s=3t$, then $G_2=K_{3t}\vee(K_{n-3t(k-1)-2}\cup(3t(k-2)+2) K_{1})$. We get
$$\rho(G)\leq \rho(G_1)< \rho(G_2)=\rho(K_{3t}\vee(K_{n-3t(k-1)-2}\cup(3t(k-2)+2) K_{1})),$$
a contradiction. Next, we consider $s\geq3t+1$.
\begin{align}
2e(G_2)=&2\left(
\begin{array}{ccccccccc}
n-(k-2)s-2 \\
2
\end{array}
\right)+2s((k-2)s+2)\nonumber
\\=&2(k-2)s^2-(2kn-k^2-k-4n+2)s+n^2-5n+6.
\end{align}
It follows from $(3)$ and Lemma \ref{le:4.3.} that
\begin{align}
\rho(G_2)\leq&\sqrt{2e(G_2)-n+1}\nonumber
\\=&\sqrt{2(k-2)s^2-(2kn-k^2-k-4n+2)s+n^2-6n+7}.
\end{align}
Let $f(s)=2(k-2)s^2-(2kn-k^2-k-4n+2)s+n^2-6n+7$. Recall that $s\geq3t+1$ and $n\geq(k-1)s+3$. Hence, we obtain $3t+1\leq s\leq\frac{n-3}{k-1}$.
\begin{align*}
&f(3t+1)-f(\frac{n-3}{k-1})
\\=&\frac{(n-3kt-k+3t-2)((2k^2-8k+8)n-k^3-(6t+2)k^2+3(6t+2)k^2+3(6t+5)k-12t-8)}{(k-1)^2}
\\ \geq&\frac{(24t+4)k^3-(144t-8)k^2+4(66t-25)k-144t+120}{(k-1)^2}
\\ \geq&\frac{648t+108-1296t+72+792t-300-144t+120}{(k-1)^2}
\\ =&0.
\end{align*}
This implies that $f(s)\leq max\{f(3t+1), f(\frac{n-3}{k-1})\}\leq f(3t+1)$ for $3t+1\leq s\leq\frac{n-3}{k-1}$. Combining this with $(4)$, $t\geq1$, $k\geq3$ and $n\geq 3kt+k-3t+6$, we obtain
\begin{align*}
\rho(G_2)\leq&\sqrt{f(3t+1)}
\\=&\sqrt{(n-3t(k-2)-3)^2-(2k-4)n-(9t^2-3t-1)k^2+(54t^2-3t+3)k-72t^2+6t-8}
\\ \leq&\sqrt{(n-3t(k-2)-3)^2-(19t^2+3t+1)k^2+(4t^2+15t-5)k+72t^2+6t-16}
\\ \leq&\sqrt{(n-3t(k-2)-3)^2-(81t^2-12t+8)}
\\ <&n-3t(k-2)-3.
\end{align*}
Since $K_{n-3t(k-2)-2}$ is a proper subgraph of $K_{3t}\vee(K_{n-3t(k-1)-2}\cup(3t(k-2)+2) K_{1})$, we have
$$\rho(G)\leq\rho(G_1)\leq\rho(G_2)<n-3t(k-2)-2=\rho(K_{n-3t(k-2)-2})<\rho(K_{3t}\vee(K_{n-3t(k-1)-2}\cup(3t(k-2)+2) K_{1})),$$
a contradiction.

This completes the proof.
$\hfill\square$\\

\section{Proof of Theorem \ref{th:1.3.} }
\hspace{1.3em}

In the following, we introduce some lemmas concerning the signless Laplacian spectral radius.

\noindent\begin{lemma}\label{le:5.1.}\cite{SYZL}
 Let $G$ be a connected graph. If $H$ be a subgraph of $G$, then
\begin{align*}
q(G)\geq q(H),
\end{align*}
with equality holding if and only if $G=H$.
\end{lemma}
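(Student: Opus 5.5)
This is the subgraph monotonicity of the signless Laplacian spectral radius, cited from \cite{SYZL}. I would prove it with the Perron--Frobenius theorem applied to $Q(G)$.

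\textbf{Reduction to a spanning subgraph.} I first reduce to the case where $H$ is a spanning subgraph of $G$. If $H$ misses some vertices of $G$, add them to $H$ as isolated vertices to obtain $H'$. Then $Q(H')=Q(H)\oplus 0$, so $q(H')=q(H)$. We may therefore assume $V(H)=V(G)$ and $E(H)\subseteq E(G)$.

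\textbf{The inequality.} Since $Q(G)=D(G)+A(G)$ and $Q(H)=D(H)+A(H)$, the matrix $Q(G)-Q(H)$ has off-diagonal entries equal to $0$ or $1$. Its diagonal entries are $d_G(v)-d_H(v)\ge 0$. Hence $Q(G)\ge Q(H)$ entrywise, and both matrices are nonnegative. Let $x\ge 0$ be a unit Perron vector of $Q(H)$; it is nonnegative because $Q(H)$ is nonnegative symmetric. By the Rayleigh quotient,
\begin{align*}
q(G)\ge x^{T}Q(G)x\ge x^{T}Q(H)x=q(H).
\end{align*}

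\textbf{Equality case.} Suppose $q(G)=q(H)$. Then both inequalities above are equalities. The first equality forces $x$ to be an eigenvector of $Q(G)$ for $q(G)$. Because $G$ is connected, $Q(G)$ is irreducible, and Perron--Frobenius then gives $x>0$ componentwise. The second equality gives
\begin{align*}
x^{T}(Q(G)-Q(H))x=\sum_{uv\in E(G)\setminus E(H)}(x_u+x_v)^2=0.
\end{align*}
With $x>0$, this forces $E(G)\setminus E(H)=\emptyset$. Vertex sets already agree after the reduction, provided $H$ was genuinely the same vertex set. If $H$ originally missed a vertex, we would have $E(G)\ne E(H')$ by connectivity of $G$, since some edge of $G$ is incident to the added isolated vertex, so strictness still holds. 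Thus $G=H$. The converse direction is trivial.

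\textbf{Main obstacle.} The only delicate step is the equality case, which needs strict positivity of the Perron vector. This comes from irreducibility of $Q(G)$, which holds because $G$ is connected. Applying positivity to $G$'s eigenvector, not $H$'s, is the point: $H$ may be disconnected.
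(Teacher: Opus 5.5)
Your proof is correct. The paper gives no proof of this lemma; it only quotes it from the literature, so there is no argument in the paper to compare against step by step.

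Your argument is the standard one, and each step holds:
\begin{enumerate}[(1)]
\item pad $H$ with isolated vertices to get a spanning subgraph $H'$ with $q(H')=q(H)$;
\item take a Perron vector $x$ of $Q(H')$ and use $x^{T}\bigl(Q(G)-Q(H')\bigr)x=\sum_{uv\in E(G)\setminus E(H')}(x_u+x_v)^2\ge 0$;
\item in the equality case, use that $x$ must then be a $q(G)$-eigenvector of the irreducible matrix $Q(G)$, hence strictly positive.
\end{enumerate}
You rightly apply positivity to $G$ rather than to the possibly disconnected $H$.

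The usual textbook alternative is the general Perron--Frobenius comparison theorem: if $0\le B\le A$ entrywise, $A$ is irreducible and $B\neq A$, then $\rho(B)<\rho(A)$. Applied to $B=Q(H')$ and $A=Q(G)$, this gives the lemma in one line and does not need symmetry. Your route uses symmetry and the sum-of-squares form of the quadratic form instead, which makes the equality case explicit and self-contained.

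Two small remarks. First, nonnegativity of $x$ is not actually needed. The quadratic form is a sum of squares for any $x$, and every $q(G)$-eigenvector of $Q(G)$ is a scalar multiple of a strictly positive vector, so $x_u+x_v\neq 0$ on every edge anyway. Second, your ``missed vertex'' step tacitly uses $|V(G)|\ge 2$, so that every vertex of the connected graph $G$ has an incident edge; this is harmless.
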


\noindent\begin{lemma}\label{le:5.2.}\cite{FLL}
Let $n=\sum\limits_{i=1}^{t}n_{i}+s$. If $n_{1}\geq n_{2}\geq\cdots\geq n_{t}\geq p$ and $n_1<n-s-p(t-1)$. Then
\begin{align*}
q(K_{s}\vee(K_{n_{1}}\cup K_{n_{2}}\cup\cdots\cup K_{n_{t}}))<q(K_s\vee(K_{n-s-p(t-1)}\cup(t-1)K_{p})).
\end{align*}
\end{lemma}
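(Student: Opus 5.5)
The plan is to prove Lemma \ref{le:5.2.} by a vertex-transfer argument driven by the Rayleigh quotient of $Q$. I will not invoke the spectral comparison results cited from \cite{FGHL} or \cite{FLL}. Write $G=K_{s}\vee(K_{n_{1}}\cup\cdots\cup K_{n_{t}})$ and assume $s\geq1$, so that $G$ is connected and $Q(G)$ has a positive unit Perron vector $\mathbf{x}$.

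\textbf{Step 1: structure of $\mathbf{x}$.} Vertices inside each clique $K_{n_i}$ are similar, and so are the vertices of $K_s$. Hence, by uniqueness of the Perron vector, $\mathbf{x}$ takes a constant value $x_i$ on $K_{n_i}$ and a constant value $y$ on $K_s$. The eigen-equation at a vertex of $K_{n_i}$ reads
$$q x_i=(n_i-1+s)x_i+(n_i-1)x_i+s y,$$
that is, $(q-2n_i-s+2)x_i=s y$.

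The subgraph $K_{n_1+s}$ gives $q=q(G)\geq 2(n_1+s-1)>2n_i+s-2$ for every $i$, so each coefficient $q-2n_i-s+2$ is positive. Therefore
$$x_i=\frac{sy}{q-2n_i-s+2},$$
which is nondecreasing in $n_i$. In particular $x_1\geq x_j$ for all $j$. I expect this step to be the main obstacle, since it fixes the sign needed in Step 2. The remaining work is bookkeeping.

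\textbf{Step 2: one transfer strictly increases $q$.} The hypothesis $n_1<n-s-p(t-1)$, together with $n_j\geq p$ for all $j$, forces some $j\geq2$ with $n_j>p$. Move one vertex $v$ of $K_{n_j}$ into $K_{n_1}$. That is, delete its $n_j-1$ edges inside $K_{n_j}$ and join it to all $n_1$ vertices of $K_{n_1}$. Call the result $G'=K_s\vee(K_{n_1+1}\cup K_{n_j-1}\cup\cdots)$.

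Use $\mathbf{x}^{T}Q\mathbf{x}=\sum_{uw\in E}(x_u+x_w)^2$, keeping the old vector $\mathbf{x}$. Then
$$\mathbf{x}^{T}Q(G')\mathbf{x}-\mathbf{x}^{T}Q(G)\mathbf{x}=n_1(x_1+x_j)^2-(n_j-1)(2x_j)^2\geq n_1(2x_j)^2-(n_j-1)(2x_j)^2>0.$$
This uses $x_1\geq x_j$ and $n_1\geq n_j>n_j-1$. Hence
$$q(G')\geq \mathbf{x}^{T}Q(G')\mathbf{x}>q(G).$$

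\textbf{Step 3: iterate.} After the transfer the first clique is still the largest, and every part is still at least $p$, since $n_j-1\geq p$. Re-sort the parts and apply Step 2 again. Each step strictly increases $q$ and strictly decreases $\sum_{j\geq2}(n_j-p)$. The process therefore ends exactly at $K_s\vee(K_{n-s-p(t-1)}\cup(t-1)K_p)$, and at least one strict step occurred. This gives the claimed strict inequality.
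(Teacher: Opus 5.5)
Your proof is correct. There is no in-paper proof to compare it with: the paper quotes this lemma from \cite{FLL} without proof and uses it only as a black box (with $p=1$) in the proof of Theorem~\ref{th:1.3.}.

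What you give is the standard self-contained argument for statements of this type.
\begin{itemize}
\item By symmetry and uniqueness, the Perron vector of $Q(G)$ is constant on each clique.
\item The eigen-equation gives $(q-2n_i-s+2)x_i=sy$. The coefficient is positive because $q\ge q(K_{n_1+s})=2(n_1+s-1)>2n_i+s-2$ when $s\ge 1$, and this yields $x_1\ge x_j$.
\item Moving one vertex from a part with $n_j>p$ into $K_{n_1}$ changes $\sum_{uw\in E}(x_u+x_w)^2$ by $n_1(x_1+x_j)^2-4(n_j-1)x_j^2\ge 4(n_1-n_j+1)x_j^2>0$. The Rayleigh quotient then forces a strict increase of $q$.
\item The potential $\sum_{j\ge 2}(n_j-p)$ drops by one at each step, so the process ends exactly at $K_s\vee(K_{n-s-p(t-1)}\cup(t-1)K_p)$ after at least one strict step.
\end{itemize}
The usual alternative is to write down the equitable quotient matrix of $K_s\vee(K_{n_1}\cup\cdots\cup K_{n_t})$ and compare characteristic polynomials. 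Your transfer argument avoids that computation and works uniformly in $t$ and $p$.

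For completeness, add the case $s=0$. The statement formally allows it, and your Perron-vector argument excludes it because $G$ is then disconnected. The claim is immediate there: $q(G)=2(n_1-1)<2(n-p(t-1)-1)$, since $n-p(t-1)>n_1\ge p$. In the paper's application $s\ge 3t\ge 3$, so this case never arises there.
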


Das \cite{D} provided a upper bound on the signless Laplacian spectral radius of a graph.

\noindent\begin{lemma}\label{le:5.3.}\cite{D}
Let $G$ be a graph with n vertices. Then
\begin{align*}
q(G)\geq\frac{2e(G)}{n-1}+n-2.
\end{align*}
\end{lemma}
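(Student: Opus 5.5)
Before attempting a proof of Lemma \ref{le:5.3.} as worded, I would test the inequality $q(G)\geq\frac{2e(G)}{n-1}+n-2$ on small graphs. The natural candidates are the extremal graphs from the analogous adjacency bound, Lemma \ref{le:4.3.}.

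\textbf{Equality cases.} For $K_n$ we have $q(K_n)=2n-2$ and $\frac{2e}{n-1}+n-2=n+n-2=2n-2$, so equality holds. For the star $K_{1,n-1}$ we have $q=n$ and $\frac{2(n-1)}{n-1}+n-2=n$, so again equality holds. These two cases match the equality cases one expects from Das's bound.

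\textbf{Testing a non-extremal graph.} The next step would be to check one graph that is neither a star nor complete, for example the path $P_4$. Its signless Laplacian spectral radius is $q(P_4)=2+2\cos(\pi/4)=2+\sqrt2\approx3.414$. The right-hand side is $\frac{2\cdot3}{3}+4-2=4$. So $q(P_4)<\frac{2e(P_4)}{n-1}+n-2$, and the inequality as worded fails even for a connected graph. More trivially, the edgeless graph $\overline{K_n}$ with $n\geq3$ has $q=0<n-2$. Hence the statement exactly as printed cannot be proved, and my plan ends with these counterexamples rather than a proof.

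\textbf{What the paper needs.} The direction needed in the proof of Theorem \ref{th:1.3.} mirrors how Lemma \ref{le:4.3.} is used in Section 3. There the displayed inequality is also printed with $\geq$, but it is applied as $\rho(G_2)\leq\sqrt{2e(G_2)-n+1}$. By analogy, the paper will use $q(G_2)\leq\frac{2e(G_2)}{n-1}+n-2$ to bound $q(G_2)$ above by a quadratic in $s$. It will then compare that bound with $q(K_{n-3t(k-2)-2})=2(n-3t(k-2)-3)$ over $3t+1\leq s\leq\frac{n-3}{k-1}$, exactly as was done with $f(s)$ in Section 3.

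So I would record the counterexample $P_4$ to the printed inequality and point out that the usable (and true) statement of \cite{D} is the upper bound $q(G)\leq\frac{2e(G)}{n-1}+n-2$, with equality for $K_n$ and $K_{1,n-1}$. The main obstacle is not technical: the stated lower bound is simply false, so the only correct course is to flag the misprinted inequality direction.
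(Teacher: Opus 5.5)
Your counterexamples are correct ($q(P_4)=2+\sqrt{2}<4$, and $q(\overline{K_n})=0<n-2$ for $n\geq 3$), so Lemma~\ref{le:5.3.} as printed is false. The paper gives no proof of it, only the citation to Das, and in (6) it applies the bound as $q(G_2)\leq\frac{2e(G_2)}{n-1}+n-2$, which is exactly the corrected direction you identified; the same misprint affects Lemma~\ref{le:4.3.}, which is used as an upper bound in (4). If you want to prove the corrected bound rather than cite it, combine $q(G)\leq\max_{v}\bigl(d_v+\frac{1}{d_v}\sum_{u\sim v}d_u\bigr)$ (the row sums of $D^{-1}QD$) with $\sum_{u\sim v}d_u\leq\min\{d_v(n-1),\,2e(G)-n+1\}$, which holds when $G$ has no isolated vertices, as for the connected graph $G_2$, and split into two cases according to whether $2e(G)\geq(d_v+1)(n-1)$.
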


\noindent\textbf{Proof of Theorem \ref{th:1.3.}.}

Suppose that a connected $\frac{t}{t(k-2)+1}$-tough graph $G$ has no spanning $k$-tree, where $t\geq1$ is an integer and $k\geq1$. By Lemma \ref{le:2.1.}, we have
\begin{align*}
c(G-S)\geq(k-2)|S|+3
\end{align*}
for some nonempty subset $S$ of $V(G)$. According to the definition of $\frac{t}{t(k-2)+1}$-tough graphs, we obtain
\begin{align*}
\frac{|S|}{c(G-S)}\geq \tau(G)\geq\frac{t}{t(k-2)+1}.
\end{align*}
We conclude
$$(t(k-2)+1)|S|\geq t\cdot c(G-S)\geq t((k-2)|S|+3),$$
and hence $|S|\geq3t$. Let $|S|=s$. We know $G$ is a spanning subgraph of $G_1=K_{s}\vee(K_{n_{1}}\cup K_{n_{2}}\cup\cdots\cup K_{n_{(k-2)s+3}})$ for some positive integers $n_{1}\geq n_{2}\geq\cdots\geq n_{(k-2)s+3}$ and $\sum\limits_{i=1}^{(k-2)s+3}n_{i}=n-s$. By Lemma \ref{le:5.1.}, we have
$$q(G)\leq q(G_1).$$
Let $G_2=K_{s}\vee(K_{n-(k-1)s-2}\cup((k-2)s+2) K_{1})$, where $n\geq(k-1)s+3$. By Lemma \ref{le:5.2.}, we obtain
$$q(G_1)< q(G_2).$$
If $s=3t$, then $G_2=K_{3t}\vee(K_{n-3t(k-1)-2}\cup(3t(k-2)+2) K_{1})$. We get
$$q(G)\leq q(G_1)< q(G_2)=q(K_{3t}\vee(K_{n-3t(k-1)-2}\cup(3t(k-2)+2) K_{1})),$$
a contradiction. Next, we consider $s\geq3t+1$.
\begin{align}
2e(G_2)=&2\left(
\begin{array}{ccccccccc}
n-(k-2)s-2 \\
2
\end{array}
\right)+2s((k-2)s+2)\nonumber
\\=&2(k-2)s^2-(2kn-k^2-k-4n+2)s+n^2-5n+6.
\end{align}
It follows from $(5)$ and Lemma \ref{le:5.3.} that
\begin{align}
q(G_2)\leq&\frac{2e(G_2)}{n-1}+n-2\nonumber
\\=&\frac{2(k-2)s^2-(2kn-k^2-k-4n+2)s+2n^2-8n+8}{n-1}.
\end{align}
Let $g(s)=2(k-2)s^2-(2kn-k^2-k-4n+2)s+n^2-8n+8$. Recall that $s\geq3t+1$ and $n\geq(k-1)s+3$. Hence, we obtain $3t+1\leq s\leq\frac{n-3}{k-1}$.
\begin{align*}
&g(3t+1)-g(\frac{n-3}{k-1})
\\=&\frac{(n-3kt-k+3t-2)((2k^2-8k+8)n-k^3-(6t+2)k^2+3(6t+2)k^2+3(6t+5)k-12t-8)}{(k-1)^2}
\\ \geq&\frac{(3kt^2+k+18t+4-3kt-k+3t-2)((2k^2-8k+8)(3kt^2+k+18t+4)-k^3-(6t+2)k^2}{(k-1)^2}
\\ &+\frac{3(6t+5)k-12t-8)}{(k-1)^2}
\\ =&0.
\end{align*}
This implies that $g(s)\leq max\{g(3t+1), g(\frac{n-3}{k-1})\}\leq g(3t+1)$ for $3t+1\leq s\leq\frac{n-3}{k-1}$. Combining this with $(6)$, $t\geq1$, $k\geq3$ and $n\geq 3kt^2+k+18t+4$, we obtain
\begin{align*}
q(G_2)\leq&\frac{g(3t+1)}{n-1}
\\=&\frac{2n^2-2(3kt+k-6t+2)n+(3t+1)k^2+(18t^2+15t+3)k-36t^2-30t+2}{n-1}
\\=&2(n-3t(k-2)-3)-\frac{2(k-2)n-(3t+1)k^2-3(6t^2+3t+1)k+36t^2+18t+4}{n-1}
\\ \leq&2(n-3t(k-2)-3)-\frac{(6t^2-3t+1)k^2-(30t^2-27t-1)k+36t^2-54t+2}{n-1}
\\ \leq&2(n-3t(k-2)-3).
\end{align*}
Since $K_{n-3t(k-2)-2}$ is a proper subgraph of $K_{3t}\vee(K_{n-3t(k-1)-2}\cup(3t(k-2)+2) K_{1})$, we have
$$q(G)\leq q(G_1)\leq q(G_2)<n-3t(k-2)-2=q(K_{n-3t(k-2)-2})<q(K_{3t}\vee(K_{n-3t(k-1)-2}\cup(3t(k-2)+2) K_{1})),$$
a contradiction.

This completes the proof.
$\hfill\square$\\

\section{Concluding remark}
\hspace{1.3em}

It is challenging to determine what sufficient conditions on $\tau(G)$ guarantee the existence of a spanning $k$-tree in a $\tau$-tough graph.
Win \cite{W} established the result for $\tau(G)\geq\frac{1}{k-2}$. Liu, Fan and Shu \cite{LFS} obtained the results for $\tau=\frac{1}{k}$ and
$\tau=\frac{1}{k-1}$. In this paper, we present the result for $\frac{1}{k-2}>\tau\geq\frac{1}{k-1}$. We naturally raise the following problem.\\

\noindent\begin{problem}\label{pro:1.3.}
What is the spectral condition to guarantee the existence of a spanning $k$-tree among $\tau$-tough graphs, where $\frac{1}{k-1}>\tau>\frac{1}{k}$?\\
\end{problem}

We will continue to investigate this problem in future work.\\

\textbf{Declaration of competing interest}\\

The authors declare that they have no known competing financial interests or personal relationships that could have appeared to influence the work reported in this paper.\\

\textbf{Data availability}\\

No data was used for the research described in the article.

\end{document}